\newtheorem{theorem}{Theorem}
\newtheorem{lemma}{Lemma}
\newtheorem{remark}{Remark}
\begin{document}

\title{Strong law of large numbers for a function of the local times of a transient random walk on groups}

\author{Yinshan Chang\thanks{Address: College of Mathematics, Sichuan University, Chengdu 610065, China; Email: ychang@scu.edu.cn.}, Qinwei Chen\thanks{Address: College of Mathematics, Sichuan University, Chengdu 610065, China; Email: 2023222010057@stu.scu.edu.cn.}, Qian Meng\thanks{Address: Department of Statistics, University of Washington, Seatle 98195, USA; Email: qmeng4@uw.edu.}, Xue Peng\thanks{Address: College of Mathematics, Sichuan University, Chengdu 610065, China; Email: pengxuemath@scu.edu.cn; Supported by National Natural Science Foundation of China \#12001389.}}

\date{}

\maketitle

\begin{abstract}
This paper presents the strong law of large numbers for a function of the local times of a transient random walk on groups, extending the research of Asymont and Korshunov \cite{AsymontKorshunovMR4166201} for random walks on the integer lattice $\mathbb{Z}^d$. Under some weaker conditions, we prove that a certain function of the local times converges almost surely and in $L^1$ and $L^2$. The proof is mainly based on the subadditive ergodic theorem.
\end{abstract}

\section{Introduction}

Suppose that $(S_n)_{n\geq 0}$ is a transient random walk on a countable group $G$ starting from the identity element $e$, i.e. $P(S_0=e)=1$. For $n\geq 1$, let $\xi_n=S_{n-1}^{-1}S_n$ be the $n$-th increment of the random walk.
Then, $S_n=\xi_1\xi_2\cdots\xi_n$. And we assume that $(\xi_n)_{n\geq 1}$ is a sequence of independent and identically distributed (i.i.d.) random variables taking values in $G$.

The total number of visits to an element $x\in G$ up to time $n$ is called the local time $\ell(n,x)$ of $x$, i.e.
\[\ell(n,x)=\sum_{i=0}^{n}\mathbb{I}_{(S_i=x)}.\]
For a function $f:\mathbb{Z}_{+}\to\mathbb{R}$ with $f(0)=0$, define
\[G_n(f)=\sum_{x\in G}f(\ell(n-1,x)).\]
More generally, for $0\leq m<n,$ define
\[G_{m,n}(f)=\sum_{x\in G}f(\ell(n-1,x)-\ell(m-1,x)),\]
where we define $\ell(-1,x)=0$. Note that $G_{n}(f)=G_{0,n}(f)$.

In this paper we are interested in the law of large numbers for $G_{n}(f)$ as $n\to\infty$.

Let us present some previous results. If $f(i)=\mathbb{I}_{(i\geq 1)}$, $G_{n}(f)$ represents the range $R_n$ of the random walk. Dvoretzky and Erd{\"o}s \cite{DvoretzkyErdosMR47272} showed that $\lim_{n\to\infty}G_n(f)/n=\gamma$ with probability $1$ for random walks in $d$-dimensional Euclidean space with $d\geq 3$, where $\gamma$ is the escaping probablity $P(S_n\neq 0,\forall n\geq 1)$. By different methods, Kesten, Spitzer and Whitman \cite[Section~4]{SpitzerMR388547} have obtained the same result with generalizations to the number of points swept out by a finite set. For other interesting results on the range of random walks, we refer to \cite{AsselahSchapiraSousiMR3945751,OkamuraMR4289898,MrazovicSandricSebekMR4643004,DemboOkadaMR4791423,GilchMR4718437} and the references there. If $f(i)=\mathbb{I}_{(i=j)}$, $G_{n}(f)$ represents the $j$-multiple range, i.e. the range of the points visited exactly $j$ times. In \cite{PittMR386021}, Pitt showed that $\lim_{n\to\infty}G_{n}(f)/n=\gamma^2(1-\gamma)^{j-1}$ with probability $1$ for random walks on a countable Abelian group, where $\gamma$ is the probability that the random walk does not return to the initial position. In \cite{DerriennicMR588163}, Derriennic get the same conclusion for random walks on countable groups. For other interesting results on the multiple range of random walks, we refer to \cite{HamanaMR1434120,HamanaMR1608981} and the references there. If $f(i)=i^{\alpha}$, then $G_{n}(f)$ represents the number of $\alpha$-fold self-intersections of the random walk up to time $n$. In \cite{BeckerKonigMR2501325}, Becker and K{\"o}nig showed that $\lim_{n\to\infty}G_{n}(f)/n=\sum_{j=1}^{\infty}j^{\alpha}\gamma^2(1-\gamma)^{j-1}$ with probability 1 for random walks on $\mathbb{Z}^d$. For general $f$ satisfying  $\sum_{j=1}^\infty|f(j)|^2j(1-\gamma)^j<\infty$, Asymont and Korshunov \cite[Theorem~1]{AsymontKorshunovMR4166201} showed that $\lim_{n\to\infty}G_n(f)/n=\gamma^2\sum_{j=1}^\infty f(j)(1-\gamma)^{j-1}$ in $L^2$ and almost surely for transient random walks on $\mathbb{Z}^d$.

Our main results are the following theorems.
\begin{theorem}\label{thm: main}
For a transient random walk $(S_n)_{n\geq 0}$ on a countable group $G$, let
\[\gamma=P(S_n\neq e,\forall n\ge1)>0.\]
Suppose that $f:\mathbb{Z}_+\to\mathbb{R}$ with $f(0)=0$ satisfy
\begin{equation}\label{eq: assumption for cvg a.s.}
 \sum_{j=1}^\infty|f(j)|(1-\gamma)^j<\infty.
\end{equation}
Then, as $n\to\infty$,
\[\begin{aligned}\frac{1}{n}G_n(f)\to\gamma^2\sum_{j=1}^\infty f(j)(1-\gamma)^{j-1}\end{aligned}\]
in $L^{1}$ and with probability $1$.
\end{theorem}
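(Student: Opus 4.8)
\emph{Overview.} I would split $G_n(f)$ according to the level of the local time, obtain the mean asymptotics from a renewal/time‑reversal computation, deduce almost sure convergence for \emph{bounded} $f$ from Kingman's subadditive ergodic theorem applied to the subadditive observables $i\mapsto i\wedge j$, and then pass to general $f$ by truncation; the one genuinely delicate point is controlling the contribution of sites with large local time.

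\emph{Means.} Put $V_j(n):=\#\{x\in G:\ell(n-1,x)=j\}$ and $N_{\ge j}(n):=\#\{x:\ell(n-1,x)\ge j\}$, so that $G_n(f)=\sum_{j\ge1}f(j)V_j(n)$, $V_j(n)=N_{\ge j}(n)-N_{\ge j+1}(n)$, and, since $i\wedge j=\sum_{k=1}^{j}\mathbb{I}_{\{i\ge k\}}$, also $G_n(i\wedge j)=\sum_{k=1}^{j}N_{\ge k}(n)$. Counting each site at the epoch of its $j$-th visit gives $N_{\ge j}(n)=\sum_{i=0}^{n-1}\mathbb{I}_{\{\ell(i,S_i)=j\}}$; decomposing $\{\ell(i,S_i)=j\}$ at the time of the $(j-1)$-st visit to $S_i$, the Markov property yields $p_j(i):=P(\ell(i,S_i)=j)=\big(p_1\ast r^{\ast(j-1)}\big)(i)$, where $r$ is the first-return law and, by a standard time-reversal identity (the return probability $1-\gamma$ being invariant under inverting the increments), $p_1(i)\downarrow\gamma$. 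Since $\|r\|_1=1-\gamma$ this gives $p_j(i)\to\gamma(1-\gamma)^{j-1}$ and $p_j(i)\le(1-\gamma)^{j-1}$ for all $i$, whence $E[N_{\ge j}(n)]\le(1-\gamma)^{j-1}n$, $E[V_j(n)]\le(1-\gamma)^{j-1}n$ and $E[V_j(n)]/n\to\gamma^2(1-\gamma)^{j-1}$; dominated convergence in $j$ under \eqref{eq: assumption for cvg a.s.} then gives $E[G_n(f)]/n\to\gamma^2\sum_{j\ge1}f(j)(1-\gamma)^{j-1}=:L$.

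\emph{Subadditive step, reduction, lower bound.} The map $i\mapsto i\wedge j$ is subadditive and vanishes at $0$, and $\ell(n-1,x)=\ell(m-1,x)+\big(\ell(n-1,x)-\ell(m-1,x)\big)$ for every $x$, so $\big(G_{m,n}(\,\cdot\wedge j)\big)_{0\le m<n}$ is a nonnegative, integrable, stationary subadditive process for the i.i.d.\ shift; Kingman's theorem therefore gives $G_n(\,\cdot\wedge j)/n\to c_j$ almost surely and in $L^1$, and by the previous paragraph $c_j=1-(1-\gamma)^j$ (the shift being ergodic, $c_j$ is deterministic). Successive differencing yields $N_{\ge j}(n)/n\to\gamma(1-\gamma)^{j-1}$ and $V_j(n)/n\to\gamma^2(1-\gamma)^{j-1}$ almost surely and in $L^1$ for each $j$ (recovering the results of \cite{PittMR386021,DerriennicMR588163}), hence, with $f_K(i):=f(i)\mathbb{I}_{\{i\le K\}}$, we get $G_n(f_K)/n\to L_K:=\gamma^2\sum_{j=1}^{K}f(j)(1-\gamma)^{j-1}$ almost surely and in $L^1$. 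By linearity of $G_n$ we may now assume $f\ge0$. Then $f_K\uparrow f$ and $h\mapsto G_n(h)$ is monotone on nonnegative $h$, so $\liminf_n G_n(f)/n\ge\lim_n G_n(f_K)/n=L_K$ a.s., and letting $K\to\infty$, $\liminf_n G_n(f)/n\ge L$ a.s.

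\emph{Upper bound (the main obstacle) and $L^1$.} For $f\ge0$ it remains to show $\limsup_n G_n(\bar f_K)/n\to0$ almost surely as $K\to\infty$, where $\bar f_K:=f-f_K\ge0$; combined with the lower bound this gives $\limsup_n G_n(f)/n\le L_K+\limsup_n G_n(\bar f_K)/n\to L$. This is the heart of the matter and is harder than the preceding steps, because $f$ need not be bounded — \eqref{eq: assumption for cvg a.s.} only forces $f(j)=o\big((1-\gamma)^{-j}\big)$ — and for the same reason $\bar f_K$ admits no subadditive or monotone majorant (such a majorant grows at most linearly), so one cannot simply sandwich $G_n(\bar f_K)$ between subadditive processes. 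The ingredients would be: (a) $E[G_n(\bar f_K)]\le n\varepsilon_K$ with $\varepsilon_K:=\sum_{j>K}|f(j)|(1-\gamma)^{j-1}\to0$; (b) a variance bound $\operatorname{Var}\big(G_n(\bar f_K)\big)\lesssim n\,\varepsilon_K^2$, which follows once one establishes covariance estimates of the shape $\operatorname{Cov}\big(V_j(n),V_{j'}(n)\big)=O\big(n(1-\gamma)^{j-1}(1-\gamma)^{j'-1}\big)$; and (c) the almost sure fact, from $E[N_{\ge m}(n)]\le(1-\gamma)^{m-1}n$ and Borel--Cantelli, that by time $n$ no site has local time exceeding $O(\log n)$, which bounds the jumps of $n\mapsto G_n(\bar f_K)$. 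Chebyshev and Borel--Cantelli along $n_k=2^k$ (using (a),(b)) then give $G_{n_k}(\bar f_K)/n_k\to$ its limit, which is at most $\varepsilon_K$, and (c) controls the oscillation between consecutive $n_k$; letting $K\to\infty$ completes the upper bound. (Alternatively one decomposes $[0,n)$ into i.i.d.\ blocks of a large fixed length, applies the strong law of large numbers to the block sums, and bounds the across-boundary overlap using transience.) Finally, for $L^1$ convergence: for $f\ge0$ we now have $G_n(f)/n\to L$ a.s.\ and $E[G_n(f)/n]\to L$, so, $G_n(f)/n$ being nonnegative, a Scheff\'e-type argument gives $L^1$ convergence; for general $f$ apply this to $f^+$ and $f^-$. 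The bulk of the technical work is concentrated in (b)--(c), i.e.\ in upgrading the mean bound $E[G_n(\bar f_K)]=O(n\varepsilon_K)$ to an almost sure statement uniform in $n$.
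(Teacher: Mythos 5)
Your first three steps (mean asymptotics, Kingman applied to the bounded subadditive observables $i\mapsto i\wedge j$ to recover the laws of large numbers for $N_{\ge j}(n)$ and $V_j(n)$, and the monotone lower bound $\liminf_n G_n(f)/n\ge L$) are sound and run parallel to the paper, which instead quotes the results of Pitt and Derriennic for $R_n^{(k)}/n$ directly. The problem is the upper bound, which you correctly identify as the heart of the matter but for which your plan does not work. Your key estimate (b), $\operatorname{Var}\bigl(G_n(\bar f_K)\bigr)\lesssim n\,\varepsilon_K^2$, is provably false under hypothesis \eqref{eq: assumption for cvg a.s.} alone: it already fails on the diagonal, since $\operatorname{Var}(V_j(n))$ is of order $n(1-\gamma)^{j-1}$ rather than $n(1-\gamma)^{2(j-1)}$, so the diagonal contribution to $\operatorname{Var}(G_n(\bar f_K))$ is of order $n\sum_{j>K}f(j)^2(1-\gamma)^{j-1}$, which need not even be finite under \eqref{eq: assumption for cvg a.s.}. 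Concretely, the paper's own Section~\ref{sec: proof remark} takes $f(j)=(1-\gamma)^{-j/2}$, which satisfies \eqref{eq: assumption for cvg a.s.}, and shows that for walks with $E(\tau\mid\tau<\infty)<\infty$ one has $\liminf_n E\bigl[(G_n(\bar f_K)/n)^2\bigr]\ge \gamma^2/(4a(1-\gamma))>0$ for every $K$; any bound of the form $E\bigl[(G_n(\bar f_K)/n)^2\bigr]\le C\varepsilon_K^2$ would contradict this (and would yield $L^2$ convergence, which fails there). So the Chebyshev--Borel--Cantelli route cannot close under the stated hypothesis; it essentially reproduces the Asymont--Korshunov argument and needs their stronger moment condition.

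The obstruction you cite to the alternative route --- that ``$\bar f_K$ admits no subadditive or monotone majorant (such a majorant grows at most linearly)'' --- conflates subadditive with superadditive and is exactly what the paper circumvents. The paper majorizes pointwise $f(\ell)\mathbb{I}_{(\ell>K)}\le F_K(\ell):=\sum_{j>K}f(j)\max(\ell+1-j,0)$ (take the term $j=\ell$), where each $h^{(j)}(\ell)=\max(\ell+1-j,0)$ is \emph{superadditive}, hence so is $F_K$; a superadditive majorant may grow superexponentially in $\ell$, and that is harmless because $EG_n(h^{(j)})\le n(1-\gamma)^{j-1}$ by Fekete's lemma (the exponential smallness comes from the rarity of sites with local time $\ge j$, not from the growth rate of $h^{(j)}$). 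One then applies the subadditive ergodic theorem to the single superadditive process $G_{m,n}(F_K)=\sum_{j>K}f(j)G_{m,n}(h^{(j)})$, obtaining an almost sure and $L^1$ limit $\beta_K=\lim_n EG_n(F_K)/n\le\sum_{j>K}f(j)(1-\gamma)^{j-1}\to0$. This replaces your steps (b)--(c) entirely, uses only first moments, and is why the weaker condition \eqref{eq: assumption for cvg a.s.} suffices. As written, your proof has a genuine gap at precisely this point.
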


\begin{remark}
The condition of the function $f$ in \eqref{eq: assumption for cvg a.s.} is weaker than the condition of the result of Asymont and Korshunov \cite[Theorem~1]{AsymontKorshunovMR4166201}. Moreover, to get a almost surely convergence or $L^{1}$ convergence for general random walks, it is plausible that our condition is optimal. For the proof techniques, our result is based on the subadditive ergodic theorem, while Asymont and Korshunov proved $L^2$ convergence and almost sure convergence in \cite[Theorem~1]{AsymontKorshunovMR4166201} via estimates on the mean and the variance of $G_n(f)$.
\end{remark}

\begin{remark}
In \cite{AsymontKorshunovMR4166201}, Asymont and Korshunov also mentioned the condition \eqref{eq: assumption for cvg a.s.}. They have found some evidence to explain that the condition \eqref{eq: assumption for cvg a.s.} is reasonable and necessary. Moreover, they prove the almost sure convergence under some additional technical assumptions in \cite[Theorem~4]{AsymontKorshunovMR4166201}.
\end{remark}

We have also considered $L^2$ convergence.
\begin{theorem}\label{thm: l^2}
For a transient random walk $(S_n)_{n\geq 0}$ on a countable group $G$, let
\[\gamma=P(S_n\neq e,\forall n\ge1)>0.\]
Suppose that $f:\mathbb{Z}_+\to\mathbb{R}$ with $f(0)=0$ satisfy
\begin{equation}\label{eq: assumption for cvg in mean square}
 \sum_{j=1}^\infty{f(j)}^2(1-\gamma)^j/j<\infty.
\end{equation}
Then, as $n\to\infty$,
\[\begin{aligned}\frac{1}{n}G_n(f)\to\gamma^2\sum_{j=1}^\infty f(j)(1-\gamma)^{j-1}\end{aligned}\]
in $L^2$.
\end{theorem}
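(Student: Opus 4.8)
The plan is to deduce the $L^2$ statement from an estimate of $\operatorname{Var}(G_n(f))$, treating that variance in two parts: a diagonal part, which is controlled by a crude local-time bound together with the weight $1/j$ in the hypothesis, and an off-diagonal part, which is the real work. First, observe that \eqref{eq: assumption for cvg in mean square} implies \eqref{eq: assumption for cvg a.s.}: by Cauchy--Schwarz and $\gamma>0$,
\[
\sum_{j\ge1}|f(j)|(1-\gamma)^{j}\ \le\ \Big(\sum_{j\ge1}\frac{f(j)^2(1-\gamma)^{j}}{j}\Big)^{1/2}\Big(\sum_{j\ge1}j(1-\gamma)^{j}\Big)^{1/2}\ <\ \infty .
\]
Hence Theorem~\ref{thm: main} applies, so $G_n(f)/n\to c:=\gamma^2\sum_{j\ge1}f(j)(1-\gamma)^{j-1}$ almost surely and in $L^1$, and in particular $E[G_n(f)]/n\to c$. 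Since
\[
E\Big[\Big(\tfrac1n G_n(f)-c\Big)^{2}\Big]\;=\;\frac{\operatorname{Var}\big(G_n(f)\big)}{n^{2}}\;+\;\Big(\frac{E[G_n(f)]}{n}-c\Big)^{2},
\]
it suffices to prove $\operatorname{Var}(G_n(f))=o(n^{2})$.

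Write $\ell_x:=\ell(n-1,x)$ and split
\[
\operatorname{Var}\big(G_n(f)\big)=\sum_{x\in G}\operatorname{Var}\big(f(\ell_x)\big)+\sum_{\substack{x,y\in G\\ x\ne y}}\operatorname{Cov}\big(f(\ell_x),f(\ell_y)\big).
\]
For the diagonal I would use the elementary bound $\sum_{x}P(\ell_x\ge k)\le(1-\gamma)^{k-1}\,E[R_n]\le n(1-\gamma)^{k-1}$, which follows from transience and the strong Markov property at the first visit to $x$: once $x$ has been visited the number of further visits is geometric with parameter $\gamma$. Since $P(\ell_x=k)=0$ for $k>n$,
\[
\sum_{x}\operatorname{Var}\big(f(\ell_x)\big)\ \le\ \sum_{x}E\big[f(\ell_x)^{2}\big]\ =\ \sum_{k=1}^{n}f(k)^{2}\sum_{x}P(\ell_x=k)\ \le\ n\sum_{k=1}^{n}f(k)^{2}(1-\gamma)^{k-1}.
\]
With $\varepsilon_k:=f(k)^{2}(1-\gamma)^{k}/k$, hypothesis \eqref{eq: assumption for cvg in mean square} says $\sum_k\varepsilon_k<\infty$, and a standard argument (splitting the sum at a large fixed index) gives $\sum_{k=1}^{n}k\,\varepsilon_k=o(n)$; hence $\sum_{k=1}^{n}f(k)^{2}(1-\gamma)^{k-1}=o(n)$ and the diagonal term is $o(n^{2})$. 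This is precisely where the factor $1/j$ enters: without it the right-hand side would be of order $\sum_k f(k)^2(1-\gamma)^k$, finite only under the stronger Asymont--Korshunov condition.

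For the off-diagonal term I would follow the excursion decomposition used by Asymont and Korshunov: conditioning on which of $x,y$ is reached first and then on the successive excursions of the walk between the two sites, one writes the joint law of $(\ell_x,\ell_y)$ in terms of the one-site geometric laws and the hitting probabilities $F(x,y),F(y,x)$; after subtracting $E[f(\ell_x)]E[f(\ell_y)]$ the covariance carries a factor $F(x,y)$ or $F(y,x)$, and summing over $y$ produces Green-function sums $\sum_{y}F(\,\cdot\,,y)$ against the geometric tails of the local times. The bookkeeping must be arranged so that each surviving term appears multiplied by a bounded quantity times $\sum_k k\,\varepsilon_k=o(n)$, rather than by $\sum_k f(k)^2(1-\gamma)^k$; then the whole off-diagonal sum is $o(n^2)$. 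An essentially equivalent route, which I would pursue in parallel, is to exploit the almost-additivity $G_{2m}(f)=G_m(f)+G_{m,2m}(f)-E_m$, in which $G_m(f)$ and $G_{m,2m}(f)$ are independent and equidistributed and the correction $E_m$ is supported on the sites visited both in $[0,m-1]$ and in $[m,2m-1]$; this gives the recursion $\sigma_{2m}\le\sqrt2\,\sigma_m+\|E_m\|_2$ for $\sigma_m:=\|G_m(f)-E[G_m(f)]\|_2$, and because $\gamma<1$ this forces $\sigma_n=o(n)$ as soon as $\|E_m\|_2=O(m^{\alpha})$ for some $\alpha<1$, with non-dyadic $n$ filled in by the same comparison.

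Either way, the step I expect to be the main obstacle is the quantitative decoupling: bounding $\sum_{x\ne y}\operatorname{Cov}(f(\ell_x),f(\ell_y))$ (equivalently $\|E_m\|_2$) uniformly in $n$ by $o(n^{2})$. The key probabilistic input is transience in quantitative form, namely that $P(\exists\,s\ge r:\,S_s=e)\to0$ as $r\to\infty$; this makes the expected number of sites visited both before and after a given time $o(m)$, and, combined with the geometric tails $(1-\gamma)^{\ell_x}$ of the local times and the weight $1/j$ in \eqref{eq: assumption for cvg in mean square}, it is exactly this decay that overcomes the $n^{2}$ normalization. Once $\operatorname{Var}(G_n(f))=o(n^{2})$ is established, the displayed identity yields $\tfrac1nG_n(f)\to c$ in $L^{2}$, which is the assertion of Theorem~\ref{thm: l^2}.
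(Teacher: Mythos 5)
Your reduction to $\operatorname{Var}(G_n(f))=o(n^2)$ and your treatment of the diagonal term are fine, but the proposal has a genuine gap at exactly the point you flag yourself: the off-diagonal sum $\sum_{x\ne y}\operatorname{Cov}(f(\ell_x),f(\ell_y))$ is never actually bounded. You describe a plan (``the bookkeeping must be arranged so that each surviving term appears multiplied by a bounded quantity times $\sum_k k\varepsilon_k=o(n)$''), but that bookkeeping is the entire difficulty, and it is not clear it can be done under the weak hypothesis \eqref{eq: assumption for cvg in mean square}. The Asymont--Korshunov excursion decomposition you invoke produces, after summing the Green-function factors, terms weighted by $f(j)^2\,j\,(1-\gamma)^j$ rather than $f(j)^2(1-\gamma)^j/j$ --- that extra factor of $j$ is precisely why their variance method needs the stronger condition $\sum_j f(j)^2 j(1-\gamma)^j<\infty$. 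Your alternative dyadic recursion $\sigma_{2m}\le\sqrt2\,\sigma_m+\|E_m\|_2$ likewise hinges on the unproven estimate $\|E_m\|_2=O(m^\alpha)$ with $\alpha<1$, which again requires controlling $f$ on the doubly-visited sites and is not supplied. As written, the argument establishes only the diagonal part.

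The paper's proof avoids covariances entirely and is worth comparing. It truncates $f$ at level $p$ and uses the deterministic identity $\sum_j jR_n^{(j)}=n$ to run Cauchy--Schwarz \emph{pathwise}:
\[
\Bigl(\sum_{j>p}f(j)R_n^{(j)}\Bigr)^2\le\Bigl(\sum_{j>p}\frac{f(j)^2}{j}R_n^{(j)}\Bigr)\Bigl(\sum_{j>p}jR_n^{(j)}\Bigr)\le n\sum_{j>p}\frac{f(j)^2}{j}G_n(h^{(j)}),
\]
where $h^{(j)}(\ell)=\max(\ell+1-j,0)$ dominates $\mathbb{I}_{(\ell=j)}$ and satisfies $EG_n(h^{(j)})\le n(1-\gamma)^{j-1}$ by superadditivity and Fekete's lemma. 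Taking expectations gives a tail bound $\sum_{j>p}f(j)^2(1-\gamma)^{j-1}/j$, uniform in $n$ and small in $p$; the truncated part $G_n(f^{(p)})/n$ is bounded, so its a.s.\ convergence (from Theorem~\ref{thm: main} applied to the multiple ranges) upgrades to $L^2$ by dominated convergence. If you want to salvage your route, you would need to either carry out the excursion computation in full and show the off-diagonal terms really do close under \eqref{eq: assumption for cvg in mean square}, or, more simply, notice that the second moment of the tail $G_n(f)-G_n(f^{(p)})$ can be bounded directly by the pathwise Cauchy--Schwarz above, which makes the variance decomposition unnecessary.
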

\begin{remark}\label{rem: quasi optimal}
Note that the condition \eqref{eq: assumption for cvg in mean square} is weaker than the condition in \cite[Theorem~1]{AsymontKorshunovMR4166201}. Moreover, our condition is quasi optimal for $L^2$ convergence of $G_n(f)/n$. Indeed, consider a transient random walk such that $E(\tau|\tau<\infty)<\infty$, where $\tau=\inf\{n\geq 1:S_n=e\}$. (In particular, simple random walks in $\mathbb{Z}^d$ with $d\geq 5$ satisfy this condition.) Take $f(j)=(1-\gamma)^{-j/2}$. Then the condition \eqref{eq: assumption for cvg in mean square} is not fulfilled. But for any $\delta>0$, we have $\sum_{j=1}^{\infty}f(j)^2(1-\gamma)^{j}/j^{1+\delta}<\infty$. We will  show in Section~\ref{sec: proof remark} that $G_n(f)/n$ does not converge in $L^2$ as $n\to\infty$.
\end{remark}

\section{Preliminaries}
In this section, we state Liggett's version \cite{LiggettMR806224} of the subadditive ergodic theorem.

\begin{lemma}[Subadditive Ergodic Theorem]\label{lem: SET}
Let $(X_{m,n})_{0\leq m<n}$ be a collection of random variables. Assuming that $(X_{m,n})_{0\leq m<n}$ satisfies the following four assumptions:
\begin{enumerate}
    \item $X_{0,n} \leq X_{0,m}+X_{m,n}$, whenever $0<m<n$.
    \item The joint distribution of $\{ X_{m+1,m+k+1}, k\geq 1\} $ is the same as that of $\{X_{m,m+k}, k\geq 1\} $ for each $m\geq 0$.
    \item For each $k\geq 1$, $\{ X_{nk,(n+1)k}, n\geq 1\} $ is a stationary process.
    \item For each $n$, $E|X_{0,n}|<\infty$ and $EX_{0,n}\geq -cn$ for some constant $c$.
\end{enumerate}
Then, we have that
\[\alpha=\lim_{n\to\infty}\frac{1}{n}EX_{0,n}=\inf\{EX_{0,n}/n,n\geq 1\},\]
\[X=\lim_{n\to\infty}\frac{X_{0,n}}{n}\text{ exists a.s. and in }L^{1},\]
\[EX=\alpha.\]
If the stationary process $\{ X_{nk,(n+1)k}, n\geq 1\}$ is ergodic for each $k\geq 1$, then $X=\alpha\text{ a.s.}$.
\end{lemma}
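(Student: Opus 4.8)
The plan is to prove this as a version of Kingman's subadditive ergodic theorem under Liggett's hypotheses, separating a purely deterministic step (existence of the constant $\alpha$) from the probabilistic core (almost sure convergence). First I would establish $\alpha$. Writing $a_n = EX_{0,n}$, assumption~(1) gives $EX_{0,n} \le EX_{0,m} + EX_{m,n}$, and assumption~(2) gives $EX_{m,n} = EX_{0,n-m}$, so $a_{m+n} \le a_m + a_n$. Fekete's subadditive lemma then yields $\lim_n a_n/n = \inf_n a_n/n$, and the bound $a_n \ge -cn$ from assumption~(4) makes this common value $\alpha$ finite with $\alpha \ge -c$.

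Second, I would prove the upper estimate on the limsup. Fix $k \ge 1$ and set $Y_j = X_{jk,(j+1)k}$; iterating assumption~(1) across blocks gives $X_{0,Nk} \le \sum_{j=0}^{N-1} Y_j$. By assumption~(3) the sequence $(Y_j)_{j}$ is stationary, so Birkhoff's ergodic theorem gives $N^{-1}\sum_{j=0}^{N-1} Y_j \to E[X_{0,k} \mid \mathcal I_k]$ a.s., with $\mathcal I_k$ the invariant $\sigma$-field of the block shift. Dividing by $Nk$ and absorbing the incomplete final block $X_{Nk,n}$ for $Nk \le n < (N+1)k$ — which is distributed as some $X_{0,r}$, $r < k$, so that $\max_{0\le r<k} X_{Nk,Nk+r}^+/n \to 0$ a.s. by a first-moment Borel--Cantelli argument — I obtain $\bar X := \limsup_n X_{0,n}/n \le k^{-1} E[X_{0,k}\mid\mathcal I_k]$ a.s. Taking expectations and letting $k \to \infty$ gives $E\bar X \le \alpha$. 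Next I would record an invariance property: from $X_{0,m} \le X_{0,1} + X_{1,m}$ we get $\bar X \le \bar X \circ \theta$ and $\underline X \le \underline X \circ \theta$, where $\theta$ denotes the shift $(X_{m,n}) \mapsto (X_{m+1,n+1})$; since assumption~(2) makes $\theta$ distribution-preserving, each of $\bar X$, $\underline X$ equals its own shift in distribution, and the one-sided inequalities force $\bar X = \bar X\circ\theta$ and $\underline X = \underline X \circ \theta$ a.s., i.e.\ both are shift-invariant.

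The step I expect to be the main obstacle is the matching lower bound $E\underline X \ge \alpha$. Here I would run a greedy stopping-time decomposition. Given $\epsilon > 0$ and a truncation level $M$, define for each site the first time $m \le M$ at which the forward average $X_{0,m}/m$ falls within $\epsilon$ of $\underline X$; since $\underline X$ is a liminf this stopping rule succeeds with probability close to $1$ once $M$ is large, and on the exceptional event I bound the corresponding block trivially. Tiling $[0,n)$ by consecutive such blocks and summing the subadditive inequality, each successful block of length $\ell$ contributes at most $(\underline X + \epsilon)\ell$ — here it is crucial that $\underline X$ is \emph{shift-invariant}, so the threshold is unchanged after each shift — while the exceptional blocks and the final fragment contribute an error that is $o(n)$ after taking expectations and using stationarity. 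Dividing by $n$ and letting $n \to \infty$, then $M \to \infty$, then $\epsilon \to 0$, yields $\alpha \le E\underline X$. Since $\underline X \le \bar X$ pointwise, combining with $E\bar X \le \alpha$ gives $\alpha \le E\underline X \le E\bar X \le \alpha$, so $E(\bar X - \underline X) = 0$ with a nonnegative integrand, whence $\bar X = \underline X$ a.s. Thus $X = \lim_n X_{0,n}/n$ exists a.s.\ and $EX = \alpha$.

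Finally I would obtain the remaining conclusions. For $L^1$ convergence it suffices to verify uniform integrability of $\{X_{0,n}/n\}$: the positive parts are dominated through the block decomposition of the second step, while the negative parts are controlled by $a_n \ge -cn$ together with the convergence $EX_{0,n}/n \to \alpha = EX$. For the last assertion, the invariance established above shows $X$ is measurable with respect to each $\mathcal I_k$; if the block process $\{X_{jk,(j+1)k}\}_j$ is ergodic for every $k$, then $\mathcal I_k$ is trivial, so the invariant limit $X$ is a.s.\ constant, and since $EX = \alpha$ we conclude $X = \alpha$ a.s.
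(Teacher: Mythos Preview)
The paper does not prove this lemma: it is stated in the Preliminaries as Liggett's version of Kingman's subadditive ergodic theorem and simply cited from \cite{LiggettMR806224}. So there is no ``paper's own proof'' to compare against; you have supplied a proof where the authors chose to quote a standard result.

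That said, your outline is essentially the classical route (Fekete for $\alpha$, Birkhoff on $k$-blocks for the upper bound, a greedy block decomposition for the lower bound, then $L^1$ convergence and the ergodic case). One point to tighten: in the last step you assert that $X$ is measurable with respect to each $\mathcal I_k$ because it is $\theta$-invariant. Invariance under the unit shift $\theta$ certainly gives invariance under $\theta^k$, but $\mathcal I_k$ is the invariant $\sigma$-field of the \emph{block} process $\{X_{jk,(j+1)k}\}_j$, not of $\theta^k$ acting on the full array; these need not coincide. The clean way to finish is to note that under ergodicity of the block process the Birkhoff limit in your second step is the constant $a_k/k$, so $\bar X \le a_k/k$ a.s.\ for every $k$, hence $\bar X \le \alpha$ a.s.; combined with $E\underline X \ge \alpha$ and $\underline X \le \bar X$ this forces $\underline X = \bar X = \alpha$ a.s.
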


\section{Proof of Theorem~\ref{thm: main}}
Firstly, we give a lemma.
\begin{lemma}\label{lem: Gmnf via increments}
Let $f$ be the function in Theorem~\ref{thm: main}. Then for all $0\leq m<n$, there exists a measurable function $f_{n-m}$ such that
\[G_{m,n}(f)=f_{n-m}(\xi_{m+1},\xi_{m+2},\ldots,\xi_{n-1}).\]
\end{lemma}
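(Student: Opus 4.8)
The plan is to exploit the left-translation structure of the walk and reduce everything to a finite bookkeeping over group elements. First I would rewrite the local-time increment that appears in $G_{m,n}(f)$ as
\[
\ell(n-1,x)-\ell(m-1,x)=\sum_{i=m}^{n-1}\mathbb{I}_{(S_i=x)},
\]
so that $G_{m,n}(f)=\sum_{x\in G}f\bigl(\#\{\,m\le i\le n-1:S_i=x\,\}\bigr)$. Only the positions $S_m,S_{m+1},\ldots,S_{n-1}$ enter, and since $f(0)=0$ the outer sum over $x\in G$ has at most $n-m$ nonzero terms; in particular it is a finite, well-defined sum with no convergence issue.

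Next I would factor each $S_i$ with $i\ge m$ through time $m$: writing $S_i=S_m\,T_i$ with $T_i:=\xi_{m+1}\xi_{m+2}\cdots\xi_i$ for $i\ge m$ and $T_m:=e$, we have $S_i=x$ if and only if $T_i=S_m^{-1}x$. Since left multiplication by $S_m^{-1}$ is a bijection of $G$, re-indexing the outer sum by $y=S_m^{-1}x$ gives
\[
G_{m,n}(f)=\sum_{y\in G}f\bigl(\#\{\,m\le i\le n-1:T_i=y\,\}\bigr),
\]
and the right-hand side is a function of $T_m,\ldots,T_{n-1}$ alone, hence of $\xi_{m+1},\ldots,\xi_{n-1}$ alone. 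This is exactly the point: the dependence on $S_m$ has been translated away.

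This makes $f_{n-m}$ explicit. For $(g_1,\ldots,g_{n-m-1})\in G^{\,n-m-1}$ set $t_0:=e$ and $t_k:=g_1g_2\cdots g_k$ for $1\le k\le n-m-1$, and define
\[
f_{n-m}(g_1,\ldots,g_{n-m-1})=\sum_{y\in G}f\bigl(\#\{\,0\le k\le n-m-1:t_k=y\,\}\bigr),
\]
again a finite sum because $f(0)=0$. Measurability is immediate since $G$ is countable and discrete, so $G^{\,n-m-1}$ carries the discrete $\sigma$-algebra and every function on it is measurable. Substituting $g_k=\xi_{m+k}$ recovers $t_k=T_{m+k}$ and hence $G_{m,n}(f)=f_{n-m}(\xi_{m+1},\ldots,\xi_{n-1})$.

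I do not expect a real obstacle here; the only two points deserving a word of care are the finiteness (hence well-definedness) of the sum over $G$, handled by the assumption $f(0)=0$, and the bijectivity of the translation $x\mapsto S_m^{-1}x$ that legitimizes the re-indexing. It is worth noting in passing that this representation is precisely what later lets $G_{m,n}(f)$ be fed into Lemma~\ref{lem: SET}: expressing $G_{m,n}(f)$ through the block of increments $(\xi_{m+1},\ldots,\xi_{n-1})$ via the \emph{same} function $f_{n-m}$ makes the distributional invariance in assumptions (2) and (3) of the subadditive ergodic theorem transparent.
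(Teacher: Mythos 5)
Your proof is correct and follows essentially the same route as the paper: write the local-time increment as $\sum_{i=m}^{n-1}\mathbb{I}_{(S_i=x)}$, factor out $S_m$, and re-index the sum over $G$ by $y=S_m^{-1}x$ to obtain a function of $\xi_{m+1},\ldots,\xi_{n-1}$ alone. Your added remarks on finiteness of the sum (via $f(0)=0$) and on measurability are points the paper leaves implicit, but they change nothing in the substance of the argument.
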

\begin{proof}
\begin{align*}
    G_{m,n}(f)&=\sum_{x\in G}f(\ell(n-1,x)-\ell(m-1,x))\\
    &=\sum_{x\in G}f\left(\sum_{i=m}^{n-1}\mathbb{I}_{(S_i=x)}\right)\\
    &=\sum_{x\in G}f\left(\mathbb{I}_{(S_{m}=x)}+\sum_{i=m+1}^{n-1}\mathbb{I}_{(S_{m}\xi_{m+1}\xi_{m+2}\cdots\xi_{i}=x)}\right)\\
    &\overset{y=S_{m}^{-1}x}{=}\sum_{y\in G}f\left(\mathbb{I}_{(y=e)}+\sum_{i=m+1}^{n-1}\mathbb{I}_{(\xi_{m+1}\xi_{m+2}\cdots\xi_{i}=y)}\right)\\
    &:=f_{n-m}(\xi_{m+1},\xi_{m+2},\ldots,\xi_{n-1}).
\end{align*}
\end{proof}

In the following, we  prove  Theorem~\ref{thm: main}.

\begin{proof}[Proof of Theorem~\ref{thm: main}]
Without loss of generality, we assume that $f$ is a nonnegative function.

Define the range of $k$-multiple points
\[R_n^{(k)}=\sum_{x\in G}\mathbb{I}_{(\ell(n-1,x)=k)},\]
i.e. the number of points that are visited exactly $k$ times up to time $n-1$. Thus $R^{(k)}_n=0$ for all $k>n$ and
\begin{align*}
    G_n(f)&=\sum_{x\in G}f(\ell(n-1,x))\left[\sum^{\infty}_{k=1}\mathbb{I}_{(\ell(n-1,x)=k)}\right]\\
    &=\sum^{\infty}_{k=1}f(k)\left[\sum_{x\in G}\mathbb{I}_{(\ell(n-1,x)=k)}\right]\\
    &=\sum^{\infty}_{k=1}f(k)R_n^{(k)}.
\end{align*}

For $j\geq 1$, define
\begin{equation}\label{eq: defn h j}
h^{(j)}(\ell):=\max(\ell+1-j,0).
\end{equation}
Then, we have that
\begin{equation}\label{eq: Gn hj sum}
  G_n(h^{(j)})=\sum_{k=j}^{n}(k+1-j)R_n^{(k)}.
\end{equation}
Let $R_n=\sum_{k=1}^\infty R_n^{(k)}$ be the range of random walk up to time $n-1$. Note that $n=\sum_{k=1}^{n}kR_n^{(k)}$. Hence, we have
\begin{equation}\label{eq: Gnhj with range and j range}
   n-G_n(h^{(j)})=\sum_{k=1}^{j-1}kR_n^{(k)}+(j-1)\sum_{k=j}^{n}R_n^{(k)}=(j-1)R_n-\sum_{k=1}^{j-1}(j-1-k)R_n^{(k)}.
\end{equation}
By \cite{PittMR386021, DerriennicMR588163}, we have that
\begin{equation}\label{eq: Rn large number law}
    \lim_{n\to\infty}R_n/n=\gamma,\quad \lim_{n\to\infty}R_n^{(k)}/n=\gamma^2(1-\gamma)^{k-1}, k\geq 1
\end{equation}
where the convergence is in $L^{1}$ and with probability $1$. By \eqref{eq: Gnhj with range and j range} and \eqref{eq: Rn large number law} we have that
\begin{align}\label{eq: G_nh/n L and a.s convergence}
    \lim_{n\to\infty}\frac{G_n(h^{(j)})}{n}&=1-(j-1)\lim_{n\to\infty}\frac{R_n}{n}+\sum^{j-1}_{k=1}(j-1-k)\lim_{n\to\infty}\frac{R^{(k)}_n}{n}\notag\\
    &=1-(j-1)\gamma+\sum^{j-1}_{k=1}(j-1-k)\gamma^2(1-\gamma)^{k-1}\notag\\
    &=(1-\gamma)^{j-1}
\end{align}
in $L^1$ and with probability $1$.

For an arbitrary fixed $x\in G$, $0\leq m<n$, it holds that
\[h^{(j)}(\ell(m-1,x))+h^{(j)}(\ell(n-1,x)-\ell(m-1,x))\leq h^{(j)}(\ell(n-1,x)).\]
Hence, we get that
\begin{equation}\label{eq: G superadditive}
 G_{0,m}(h^{(j)})+G_{m,n}(h^{(j)})\leq G_{0,n}(h^{(j)})
\end{equation}
and that
\begin{equation}\label{eq: EG superadditve 01}
    EG_{0,m}(h^{(j)})+EG_{m,n}(h^{(j)})\leq EG_{0,n}(h^{(j)}).
\end{equation}
By Lemma \ref{lem: Gmnf via increments}, since $(\xi_n) _{n\geq 1}$ is independent and identically distributed, we get
\begin{align*}
    EG_{m,n}(h^{(j)})&=Ef_{n-m}(\xi_{m+1}, \xi_{m+2}, \dots, \xi_{n-1})\\
    &=Ef_{n-m}(\xi_{1}, \xi_{2}, \dots, \xi_{n-m-1})\\
    &=EG_{n-m}(h^{(j)}).
\end{align*}
Hence, by \eqref{eq: EG superadditve 01} we have
\begin{equation*}
    EG_{m}(h^{(j)})+EG_{n-m}(h^{(j)})\leq EG_{n}(h^{(j)}).
\end{equation*}
Hence, by Fekete's subadditive lemma and \eqref{eq: G_nh/n L and a.s convergence} we get that
\begin{equation}\label{eq: EGhj limit}
    \sup_{n\geq 1}\frac{EG_n(h^{(j)})}{n}=\lim_{n\to\infty}\frac{EG_n(h^{(j)})}{n}=(1-\gamma)^{j-1}.
\end{equation}
By \eqref{eq: Gn hj sum}, we know that
\begin{equation}\label{eq: Rjn and Ghn}
0\leq R_n^{(j)}\leq G_n(h^{(j)}).
\end{equation}
For $p\geq1$, let $f^{(p)}(\ell)=\mathbb{I}_{[0,p]}(\ell)f(\ell)$. Then, $G_n(f^{(p)})=\sum_{j=1}^pf(j)R_n^{(j)}$. We have that
\begin{equation}\label{eq: tail control}
    0\leq G_n(f)-G_n(f^{(p)})=\sum_{j=p+1}^\infty f(j)R_n^{(j)}\leq\sum_{j=p+1}^\infty f(j)G_n(h^{(j)}).
\end{equation}
Therefore, by \eqref{eq: EGhj limit} and \eqref{eq: tail control} we have that
\begin{equation}\label{eq: Gnf minus Gnfp}
   E|G_n(f)/n-G_n(f^{(p)})/n|\leq\frac{1}{n}\sum_{j=p+1}^\infty f(j)EG_n(h^{(j)})\leq\sum_{j=p+1}^\infty f(j)(1-\gamma)^{j-1}.
\end{equation}
And for an arbitrary fixed $p\geq 1$, by \eqref{eq: Rn large number law} we have that
\begin{equation}\label{eq: Gnfp convergence}
    \lim_{n\to\infty}\frac{G_n(f^{(p)})}{n}=\lim_{n\to\infty}\sum^p_{j=1}f(j)\frac{R^{(j)}_n}{n}=\gamma^2\sum^p_{j=1}f(j)(1-\gamma)^{j-1}
\end{equation}
in $L^1$ and with probability $1$. Hence, by \eqref{eq: Gnf minus Gnfp} and \eqref{eq: Gnfp convergence} we get
\begin{align*}
&\limsup_{n\to\infty}E\left|G_n(f)/n -\gamma^2\sum_{j=1}^\infty f(j)(1-\gamma)^{j-1}\right|\\
\leq & \limsup_{n\to\infty}E|G_n(f)/n-G_n(f^{(p)})/n|\\
&+\limsup_{n\to\infty}E\left|\frac{G_n(f^p)}{n}-\gamma^2\sum_{j=1}^{p} f(j)(1-\gamma)^{j-1}\right|\\
&+\gamma^2\sum_{j=p+1}^{\infty}f(j)(1-\gamma)^{j-1}\\
\leq & (\gamma^2+1)\sum_{j=p+1}^{\infty}f(j)(1-\gamma)^{j-1},
\end{align*}
which tends to $0$ as $p\to\infty$ by \eqref{eq: assumption for cvg a.s.}. Hence, we have that
\[\frac{1}{n}G_n(f)\to\gamma^2\sum_{j=1}^\infty f(j)(1-\gamma)^{j-1}, \text{as\,\,} n\to\infty,\]
where the convergence is in $L^{1}$.

Next, we use the subadditive ergodic theorem to demonstrate that the convergence also exists almost surely. For fixed $p\geq 1$, let $Y_{m,n}=-\sum_{j=p+1}^{\infty}f(j)G_{m,n}(h^{(j)})$, $0\leq m<n$. In the following, we verify that $Y_{m,n}, 0\leq m<n$ satisfies the four assumptions of the subadditive ergodic theorem (see Lemma~\ref{lem: SET}).

Firstly, by the superadditivity of $G_n(h^{(j)})$ in \eqref{eq: G superadditive} and positivity of $f(j)$, we have that for all $0<m<n$,
\[Y_{0,n}\leq Y_{0,m}+Y_{m,n}.\]

Secondly, by the definition of $h^{(j)}$ and $G_{m,n}(h^{(j)})$, we get that for all $j\geq n-m+1$, $G_{m,n}(h^{(j)})=0$. Thus, by  Lemma~\ref{lem: Gmnf via increments}, for $0\leq m<n$, there exsits a function $h_{n-m}$ such that
\begin{align}\label{eq: Ymn and hn-m}
    Y_{m,n}&=-\sum_{j=p+1}^{n-m}f(j)G_{m,n}(h^{(j)})\notag\\
    &=-\sum_{j=p+1}^{n-m}f(j)h^{(j)}_{n-m}(\xi_{m+1}, \xi_{m+2}, \dots, \xi_{n-1})\notag\\
    &:=h_{n-m}(\xi_{m+1}, \xi_{m+2}, \dots, \xi_{n-1}).
\end{align}
Hence, for all $m\geq 0$ and $k\geq 1$,
\[Y_{m+1,m+k+1}=h_{k}(\xi_{m+2}, \xi_{m+3}, \dots, \xi_{m+k}), \,\,Y_{m,m+k}=h_{k}(\xi_{m+1}, \xi_{m+2}, \dots, \xi_{m+k-1}).\]
Since $\{\xi_n,n\geq 1\}$ is a sequence of independent and identically distributed random variables, the joint distribution of $\{Y_{m+1,m+k+1},k\geq 1\}$ is the same as $\{Y_{m,m+k},k\geq 1\}$.

Next, by \eqref{eq: Ymn and hn-m} we have that
\[Y_{nk,(n+1)k}=h_k(\xi_{nk+1}, \xi_{nk+2}, \dots, \xi_{(n+1)k-1}).\]
Since $\{\xi_n,n\geq 1\}$ is a sequence of independent and identically distributed random variables, for each fixed $k\geq 1$,  $\{Y_{nk, (n+1)k},n\geq 1\}$ is also a sequence of independent and identically distributed random variables, from which we deduce the stationarity and ergodicity of $\{Y_{nk, (n+1)k}, n\geq 1\}$.

Thirdly, by \eqref{eq: EGhj limit} we have that for any $n\geq 1$,
\begin{align*}
E|Y_{0,n}|&=E\left[\sum^{\infty}_{j=p+1}f(j)G_n(h^{(j)})\right]\\
&=\sum^{\infty}_{j=p+1}f(j)EG_n(h^{(j)})\\
&\leq n\sum^{\infty}_{j=p+1}f(j)(1-\gamma)^{j-1}<\infty.
\end{align*}
And let $c=\sum^{\infty}_{j=1}f(j)(1-\gamma)^{j-1}$, we also get
\[EY_{0,n}=-E|Y_{0,n}|\geq -cn.\]

Hence, by the subadditive ergodic theorem (see Lemma~\ref{lem: SET}) there exists a constant $\beta_p\geq 0$ such that
\[-\beta_p=\lim_{n\to\infty}EY_{0,n}/n=\inf\{EY_{0,n}/n, n\geq 1\},\]
and
\[\lim_{n\to\infty}\frac{Y_{0,n}}{n}=-\beta_p\]
in $L^1$ and with probability $1$. Equivalently,
\begin{equation}\label{eq: Gnh and beta p}
    \lim_{n\to\infty}\frac{1}{n}\sum_{j=p+1}^{\infty}f(j)G_{n}(h^{(j)})=\beta_p
\end{equation}
in $L^1$ and with probability $1$. Then by Fatou's lemma and \eqref{eq: Gnf minus Gnfp}, we can get that
\begin{align}\label{eq: upper bound of beta p}
    \beta_p&=E\left(\lim_{n\to\infty}\frac{1}{n}\sum_{j=p+1}^{\infty}f(j)G_{n}(h^{(j)})\right)\notag\\
    &\leq \liminf_{n\to\infty}E\left(\frac{1}{n}\sum_{j=p+1}^{\infty}f(j)G_{n}(h^{(j)})\right)\notag\\
    &=\liminf_{n\to\infty}\frac{1}{n}\sum_{j=p+1}^{\infty}f(j)EG_{n}(h^{(j)})\notag\\
    &\leq \sum^{\infty}_{j=p+1}f(j)(1-\gamma)^{j-1}.
\end{align}
Hence, by \eqref{eq: tail control}, \eqref{eq: Gnfp convergence}, \eqref{eq: Gnh and beta p} and \eqref{eq: upper bound of beta p} we get that
\begin{align*}
    &\limsup_{n\to\infty}\left|\frac{G_n(f)}{n}-\gamma^2\sum^{\infty}_{j=1}f(j)(1-\gamma)^{j-1}\right|\\
    &\leq \limsup_{n\to\infty}\left|\frac{G_n(f)}{n}-\frac{G_n(f^{(p)})}{n}\right|+\limsup_{n\to\infty}\left|\frac{G_n(f^{(p)})}{n}-\gamma^2\sum^p_{j=1}f(j)(1-\gamma)^{j-1}\right|\\
    &\quad +\gamma^2\sum^{\infty}_{j=p+1}f(j)(1-\gamma)^{j-1}\\
    &\leq \limsup_{n\to\infty}\frac{1}{n}\sum^{\infty}_{j=p+1}f(j)G_n(h^{(j)})+\gamma^2\sum^{\infty}_{j=p+1}f(j)(1-\gamma)^{j-1}\\
    &=\beta_p+\gamma^2\sum^{\infty}_{j=p+1}f(j)(1-\gamma)^{j-1}\\
    &\leq (1+\gamma^2)\sum^{\infty}_{j=p+1}f(j)(1-\gamma)^{j-1},
\end{align*}
which tends to $0$ as $p\to\infty$. Therefore, as $n\to\infty$, $G_n(f)/n$ tends to $\gamma^2\sum^{\infty}_{j=1}f(j)(1-\gamma)^{j-1}$ with probability $1$.
\end{proof}

\section{Proof of Theorem~\ref{thm: l^2}}

Firstly, by \eqref{eq: assumption for cvg in mean square} and Cauchy-Schwarz inequality we have
\[\sum^{\infty}_{j=1}|f(j)|(1-\gamma)^j\leq \left(\sum^{\infty}_{j=1}\frac{f^2(j)}{j}(1-\gamma)^{j}\right)^{\frac{1}{2}}\left(\sum^{\infty}_{j=1}j(1-\gamma)^j\right)^{\frac{1}{2}}<\infty,\]
which means that \eqref{eq: assumption for cvg a.s.} holds. Hence, Theorem~\ref{thm: main} is applicable.

Next, without loss of generality, we still assume that $f$ is a nonnegative function.  For $p\geq 1$, define $f^{(p)}(\ell)=\mathbb{I}_{[0,p]}(\ell)f(\ell)$ as the same as in the proof of Theorem ~\ref{thm: main}. Since $0\leq\gamma\leq 1$ and $(a+b+c)^2\leq 3(a^2+b^2+c^2)$, we have that
\begin{align}\label{eq: L2 convergence main}
&E\left[\left(\frac{G_{n}(f)}{n}-\gamma^{2}\sum_{j=1}^{\infty}f(j)(1-\gamma)^{j-1}\right)^2\right]\notag\\
\leq & 3E\left[\left(\frac{G_{n}(f)}{n}-\frac{G_{n}(f^{(p)})}{n}\right)^2\right]\notag\\
 &+3E\left[\left(\frac{G_n(f^{(p)})}{n}-\gamma^2\sum^p_{j=1}f(j)(1-\gamma)^{j-1}\right)^2\right]\notag\\
 &+3\left(\sum^{\infty}_{j=p+1}f(j)(1-\gamma)^{j-1}\right)^2.
\end{align}

Next, by Cauchy-Schwarz inequality, the equality $n=\sum_{j=1}^{n}jR_n^{(j)}=\sum^{\infty}_{j=1}jR_n^{(j)}$, \eqref{eq: Rjn and Ghn} and \eqref{eq: tail control}, we can get
\begin{align*}
 \left(\frac{G_{n}(f)}{n}-\frac{G_{n}(f^{(p)})}{n}\right)^2&=\frac{1}{n^2}\left(\sum^{\infty}_{j=p+1}f(j)R^{(j)}_n\right)^2\\
 &\leq \frac{1}{n^2}\left(\sum^{\infty}_{j=p+1}\frac{f^2(j)}{j}R^{(j)}_n\right)\left(\sum^{\infty}_{j=p+1}jR^{(j)}_n\right)\\
 &\leq \frac{1}{n}\left(\sum^{\infty}_{j=p+1}\frac{f^2(j)}{j}R^{(j)}_n\right)\\
 &\leq \sum^{\infty}_{j=p+1}\frac{f^2(j)}{j}\frac{G_n(h^{(j)})}{n}.
\end{align*}
Therefore, by \eqref{eq: EGhj limit} we have that for all $n\geq 1$,
\begin{equation}\label{eq: Gnf-Gnfp L2 inequ}
E\left[\left(\frac{G_{n}(f)}{n}-\frac{G_{n}(f^{(p)})}{n}\right)^2\right] \leq \sum^{\infty}_{j=p+1}\frac{f^2(j)}{j}\cdot\frac{EG_n(h^{(j)})}{n}\leq \sum^{\infty}_{j=p+1}\frac{f^2(j)}{j}(1-\gamma)^{j-1}.
\end{equation}

Next, for any fixed $p\geq 1$, the random variable $\frac{G_n(f^{(p)})}{n}$ is bounded as
\[\left|\frac{G_n(f^{(p)})}{n}\right|=\sum^p_{j=1}f(j)\frac{R^{(j)}_n}{n}\leq \sum^p_{j=1}f(j)\frac{n}{n}\leq p\max_{1\leq j\leq p}f(j).\]
Hence, by \eqref{eq: Gnfp convergence} and the dominated convergence theorem, we have that
\begin{equation}\label{eq: Gnfp L2 convergence}
    \lim_{n\to\infty}E\left[\left(\frac{G_n(f^{(p)})}{n}-\gamma^2\sum^p_{j=1}f(j)(1-\gamma)^{j-1}\right)^2\right]=0.
\end{equation}

Next, by Cauchy-Schwarz inequality again, it holds that
\begin{align}\label{eq: sum f L2}
    \left(\sum^{\infty}_{j=p+1}f(j)(1-\gamma)^{j-1}\right)^2
    \leq& \left(\sum^{\infty}_{j=p+1}\frac{f^2(j)}{j}(1-\gamma)^{j-1}\right)\left(\sum^{\infty}_{j=p+1}j(1-\gamma)^{j-1}\right)\notag\\
    \leq& C\left(\sum^{\infty}_{j=p+1}\frac{f^2(j)}{j}(1-\gamma)^{j-1}\right),
\end{align}
where $C=\sum^{\infty}_{j=1}j(1-\gamma)^{j-1}$ is a positive constant independent of $p,n$ and $f$.

Finally, by \eqref{eq: L2 convergence main}, \eqref{eq: Gnf-Gnfp L2 inequ}, \eqref{eq: Gnfp L2 convergence} and  \eqref{eq: sum f L2}, we get
\begin{align*}
    &\limsup_{n\to\infty}E\left[\left(\frac{G_{n}(f)}{n}-\gamma^{2}\sum_{j=1}^{\infty}f(j)(1-\gamma)^{j-1}\right)^2\right]\\
    \leq& 3\limsup_{n\to\infty}E\left[\left(\frac{G_{n}(f)}{n}-\frac{G_{n}(f^{(p)})}{n}\right)^2\right]\\
    &+3\limsup_{n\to\infty}E\left[\left(\frac{G_n(f^{(p)})}{n}-\gamma^2\sum^p_{j=1}f(j)(1-\gamma)^{j-1}\right)^2\right]\\
    &+3\left(\sum^{\infty}_{j=p+1}f(j)(1-\gamma)^{j-1}\right)^2\\
  \leq& 3(C+1)\sum^{\infty}_{j=p+1}\frac{f^2(j)}{j}(1-\gamma)^{j-1},
\end{align*}
which tends to $0$ as $p\to\infty$ by \eqref{eq: assumption for cvg in mean square}. Hence, we get
\[\limsup_{n\to\infty}E\left[\left(\frac{G_{n}(f)}{n}-\gamma^{2}\sum_{j=1}^{\infty}f(j)(1-\gamma)^{j-1}\right)^2\right]=0,\]
i.e. as $n\to\infty$, $G_n(f)/n$ converges to $\gamma^2\sum_{j=1}^\infty f(j)(1-\gamma)^{j-1}$ in $L^2$.

\section{Proof of Remark~\ref{rem: quasi optimal}}\label{sec: proof remark}
We will prove Remark~\ref{rem: quasi optimal} in this section. Firstly, we give a lower bound of $ER_{n}^{(j)}$ as follows:
\begin{lemma}\label{lem: lower bound of the expectation of R_{n}^{(j)}}
Define $\tau_0=0$. For $j\geq 1$, we recursively define
\[\tau_{j}=\inf\{n>\tau_{j-1}:S_n=e\}\]
with the convention that $\inf\emptyset=\infty$. In other words, $\tau_{j}$ is the $j$-th return time to the initial position $e$. Then, we have the following inequality:
\[ER_{n}^{(j)}\geq \gamma^{2}E\max(n-\tau_{j-1},0).\]
\end{lemma}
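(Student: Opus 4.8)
The plan is to use a first‑passage decomposition of $R_n^{(j)}$ together with two applications of the ``fresh‑start'' property of the walk. Every $x\in G$ with $\ell(n-1,x)\ge1$ has a well‑defined first visit time $m(x):=\min\{i\ge0:S_i=x\}\in\{0,\dots,n-1\}$, and $m(\cdot)$ is a bijection from $\{x:\ell(n-1,x)\ge1\}$ onto the set of fresh times $F_n:=\{0\le m\le n-1:S_m\notin\{S_0,\dots,S_{m-1}\}\}$, with $\ell(n-1,x)=\ell(n-1,S_{m(x)})$; hence
\[
R_n^{(j)}=\sum_{m=0}^{n-1}\mathbb I(m\in F_n)\,\mathbb I(\ell(n-1,S_m)=j),
\]
and I would lower‑bound each summand in expectation.

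Fix $m$ and let $W_k:=\xi_{m+1}\cdots\xi_{m+k}$ be the walk restarted at time $m$, so that $S_{m+k}=S_mW_k$ and $S_{m+k}=S_m\iff W_k=e$; write $\sigma_{j-1}:=m+\tilde\tau_{j-1}$, where $\tilde\tau_{j-1}$ is the $(j-1)$-th return time of $W$ to $e$ (with $\tilde\tau_0:=0$). Since $(W_k)_k\overset{d}{=}(S_k)_k$ and $W$ depends only on $\xi_{m+1},\xi_{m+2},\dots$, we have $\tilde\tau_{j-1}\overset{d}{=}\tau_{j-1}$, independent of $\{m\in F_n\}\in\sigma(\xi_1,\dots,\xi_m)$. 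I would then restrict the $m$-th summand to the sub‑event $\{m\in F_n\}\cap\{\sigma_{j-1}\le n-1\}\cap\{S_{\sigma_{j-1}+k}\ne S_m\ \forall k\ge1\}$. On this sub‑event $S_m$ is visited once at time $m$, then exactly $j-1$ times at $\sigma_1<\dots<\sigma_{j-1}\le n-1$, and never after $\sigma_{j-1}$, so $\ell(n-1,S_m)=j$; thus it is contained in $\{m\in F_n,\ \ell(n-1,S_m)=j\}$. Its probability factorizes: $\{\sigma_{j-1}\le n-1\}$ and $\{m\in F_n\}$ are mutually independent and $\mathcal F_{\sigma_{j-1}}$‑measurable, with probabilities $P(\tau_{j-1}\le n-1-m)$ and $P(S_i\ne S_m\ \forall\,0\le i<m)$; and by the strong Markov property at the stopping time $\sigma_{j-1}$ (finite on $\{\sigma_{j-1}\le n-1\}$, with $S_{\sigma_{j-1}}=S_m$), the event $\{S_{\sigma_{j-1}+k}\ne S_m\ \forall k\ge1\}$ has conditional probability $\gamma$ given $\mathcal F_{\sigma_{j-1}}$. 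Hence
\[
P(m\in F_n,\ \ell(n-1,S_m)=j)\ \ge\ \gamma\cdot P(S_i\ne S_m\ \forall\,0\le i<m)\cdot P(\tau_{j-1}\le n-1-m).
\]

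To convert the first factor, I would invoke the standard time‑reversal fact $P(S_i\ne S_m\ \forall\,0\le i<m)\ge\gamma$: reading the increments backwards identifies this with the probability that a random walk of step law $\xi^{-1}$ started at $e$ avoids $e$ for $m$ steps, which is at least its no‑return probability, and that probability equals $\gamma$ because $P(S_k=e)$ is invariant under $\xi\mapsto\xi^{-1}$, so the Green functions at $e$ coincide. Summing over $m=0,\dots,n-1$ and substituting $l=n-1-m$,
\[
\begin{aligned}
ER_n^{(j)}&\ge\gamma^2\sum_{m=0}^{n-1}P(\tau_{j-1}\le n-1-m)=\gamma^2\sum_{l=0}^{n-1}P(\tau_{j-1}\le l)\\
&=\gamma^2\,E\,\#\{l:0\le l\le n-1,\ \tau_{j-1}\le l\}=\gamma^2\,E\max(n-\tau_{j-1},0),
\end{aligned}
\]
which is the assertion. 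I expect the main obstacle to be the careful bookkeeping in the factorization step: verifying that $\sigma_{j-1}$ is a stopping time that is finite exactly on $\{\sigma_{j-1}\le n-1\}=\{\tilde\tau_{j-1}\le n-1-m\}$, that the freshness event and the ``$j-1$ returns within the window'' event are $\mathcal F_{\sigma_{j-1}}$‑measurable and mutually independent with the stated marginals, and that the strong Markov property is applied in its correct right‑multiplicative form $S_{\sigma_{j-1}+k}=S_{\sigma_{j-1}}\cdot(\text{independent fresh walk})$. The time‑reversal lemma should be stated with care since the walk need not be symmetric, and the degenerate cases $j=1$ (where $\tau_0=0$ and the bound reads $ER_n^{(1)}\ge\gamma^2 n$) and $m>n-j$ (where the $m$-th estimate is $0\ge0$) are immediate.
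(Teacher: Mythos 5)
Your argument is correct and is essentially the paper's own proof: both decompose $ER_n^{(j)}$ over first-visit (fresh) times, factor the freshness event from the ``exactly $j$ visits in the remaining window'' event by independence of past and future increments, and bound each factor below by $\gamma$ (via time reversal for the past, and via no-return-after-the-$(j-1)$st-return for the future), then sum $P(\tau_{j-1}\le n-1-m)$ over $m$. The only cosmetic differences are that you apply the strong Markov property at the stopping time $\sigma_{j-1}$ where the paper sums over the value of $\tau_{j-1}$, and you spell out the time-reversal justification of $P(E_m)\ge\gamma$ that the paper states without comment.
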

\begin{proof}
Since
\[R_{n}^{(j)}=\sharp\left\{k\in[0,n-1]: E_{k}F_{k,n}^{(j)}\text{ occurs}\right\},\]
where
\[E_{k}=\left\{S_{m}\neq S_{k}, \forall 0\leq m<k\right\}, F_{k,n}^{(j)}=\left\{\sum_{m=k}^{n-1}\mathbb{I}_{(S_{m}=S_{k})}=j\right\},\]
we have
\[ER_{n}^{(j)}=\sum_{k=0}^{n-1}P(E_{k}F_{k,n}^{(j)}).\]
By independence between $(S_{m})_{0\leq m\leq k}$ and $(S^{-1}_{k}S_{k+m})_{0\leq m\leq n-1-k}$, we know that $E_{k}$
and $F_{k,n}^{(j)}$ are independent. Hence,
\[ER_{n}^{(j)}=\sum_{k=0}^{n-1}P(E_{k})P(F_{k,n}^{(j)}).\]
Moreover, we have that
\[P(E_{k})=P(S_m\neq e, \forall 1\leq m\leq k)\geq P(S_m\neq e, \forall m\geq1)=\gamma\]
and
\begin{align*}
P(F_{k,n}^{(j)})&=P\left(\sum^{n-1}_{m=k}\mathbb{I}_{(S^{-1}_kS_m=e)}=j\right)\\
&=P\left(\sum_{m=0}^{n-1-k}\mathbb{I}_{(S_m=e)}=j\right)\\
&=\sum_{\ell=0}^{n-1-k}P(\tau_{j-1}=\ell\text{ and }\forall\ell<m\leq n-1-k,S_m\neq e)\\
&=\sum_{\ell=0}^{n-1-k}P(\tau_{j-1}=\ell)P(\forall 0<m\leq n-1-k-\ell,S_m\neq e)\\
&\geq\sum_{\ell=0}^{n-1-k}P(\tau_{j-1}=\ell)P(\forall m\geq 1, S_m\neq e)\\
&=\gamma P\left(\tau_{j-1}\leq n-1-k\right).
\end{align*}
Hence, we have that
\begin{equation*}
ER_{n}^{(j)}\geq\gamma^2\sum_{k=0}^{n-1}P(\tau_{j-1}\leq n-1-k)=\gamma^2E\max(n-\tau_{j-1},0).
\end{equation*}
\end{proof}

\begin{proof}[Proof of Remark~\ref{rem: quasi optimal}]

Since $f(j)=(1-\gamma)^{-\frac{j}{2}}$, we have
\[\sum^{\infty}_{j=1}(f(j))^2(1-\gamma)^j/j=\sum^{\infty}_{j=1}\frac{1}{j}=\infty,\]
which means the condition \eqref{eq: assumption for cvg in mean square} is not fulfilled. And for any $\delta>0$, we have
\[\sum_{j=1}^{\infty}(f(j))^2(1-\gamma)^{j}/j^{1+\delta}=\sum^{\infty}_{j=1}\frac{1}{j^{1+\delta}}<\infty.\]

Let $(S_n)_{n\geq 0}$ be a transient random walk such that $E(\tau|\tau<\infty)<\infty$, where $\tau=\inf\{n\geq 1:S_n=e\}$.  We assume that $G_n(f)/n$ converges in $L^2$ as $n\to\infty$.

Fix $0<\delta<1$. By Cauchy-Schwarz inequality,
\begin{equation*}\label{eq: f(j) condition 1 remark}
\sum^{\infty}_{j=1}f(j)(1-\gamma)^{j-1}\leq \left(\sum^{\infty}_{j=1}\frac{(f(j))^2}{j^{1+\delta}}(1-\gamma)^{j-1}\right)^{\frac{1}{2}}\left(\sum^{\infty}_{j=1}j^{1+\delta}(1-\gamma)^{j-1}\right)^{\frac{1}{2}}<\infty.
\end{equation*}
Hence, by Theorem~\ref{thm: main} we get the almost sure convergence of $\frac{G_n(f)}{n}$ to $\gamma^2\sum^{\infty}_{j=1}f(j)(1-\gamma)^{j-1}$ as $n\to\infty$. As we have already assumed that $G_n(f)/n$ converges in $L^2$ as $n\to\infty$, we get that $\frac{G_n(f)}{n}$ also converges in $L^2$ to $\gamma^2\sum^{\infty}_{j=1}f(j)(1-\gamma)^{j-1}$ as $n\to\infty$.

For $p\geq 1$, let $f^{(p)}(\ell)=\mathbb{I}_{[0,p]}(\ell)f(\ell)$ as the same as in the proof of Theorem ~\ref{thm: main}.  Then
\[\sum^{\infty}_{j=1}\frac{(f^{(p)}(j))^2}{j}(1-\gamma)^{j-1}=\sum^p_{j=1}\frac{(f(j))^2}{j}(1-\gamma)^{j-1}<\infty.\]
By Theorem~\ref{thm: l^2}, $\frac{G_n(f^{(p)})}{n}$ converges in $L^2$ to $\gamma^2\sum^{p}_{j=1}f(j)(1-\gamma)^{j-1}$ as $n\to\infty$. Hence, $\frac{G_n(f)}{n}-\frac{G_n(f^{(p)})}{n}$ also converges in $L^2$ and
\begin{equation}\label{eq: Gf-Gfp remark}
\lim_{n\to\infty}E\left[\left(\frac{G_n(f)}{n}-\frac{G_n(f^{(p)})}{n}\right)^2\right]=\left(\gamma^2\sum^{\infty}_{j=p+1}f(j)(1-\gamma)^{j-1}\right)^2.
\end{equation}

On the other hand,
\begin{align*}
	\left(\frac{G_n(f)}{n}-\frac{G_n(f^{(p)})}{n}\right)^2&=\frac{1}{n^2}\left(\sum^{\infty}_{j=p+1}f(j)R^{(j)}_n\right)^2\\
	&=\frac{1}{n^2}\sum_{j_1,j_2\geq p+1}f(j_1)f(j_2)R^{(j_1)}_nR^{(j_2)}_n\\
	&\geq \frac{1}{n^2}\sum^{\infty}_{j=p+1}(f(j))^2(R^{(j)}_n)^2\\
	&\geq \frac{1}{n^2}\sum^{\infty}_{j=p+1}(f(j))^2R^{(j)}_n.
\end{align*}
Hence,
\begin{equation}\label{eq: Gf-Gfp L2 remark}
 E\left[\left(\frac{G_n(f)}{n}-\frac{G_n(f^{(p)})}{n}\right)^2\right]\geq \frac{1}{n^2}\sum^{\infty}_{j=1}(f(j))^2ER^{(j)}_n.
\end{equation}
Let $a=E(\tau|\tau<\infty)<\infty$. Then, we have that
\[E(\tau_{j-1}|\tau_{j-1}<\infty)=a(j-1).\]
Then, for all $j\leq 1+\frac{n}{2a}$, we have that $E(n-\tau_{j-1}|\tau_{j-1}<\infty)=n-a(j-1)\geq n/2$ and that
\begin{equation}\label{eq: tau max}
E\max(n-\tau_{j-1},0)\geq P(\tau_{j-1}<\infty)E(n-\tau_{j-1}|\tau_{j-1}<\infty)\geq \frac{n}{2}(1-\gamma)^{j-1}.
\end{equation}
Recall that $f(j)=(1-\gamma)^{-\frac{j}{2}}$. By Lemma~\ref{lem: lower bound of the expectation of R_{n}^{(j)}}, \eqref{eq: Gf-Gfp L2 remark} and \eqref{eq: tau max}, we have that
\begin{align*}
E\left[\left(\frac{G_n(f)}{n}-\frac{G_n(f^{(p)})}{n}\right)^2\right]&\geq \frac{\gamma^2}{n^2}\sum^{\infty}_{j=p+1}(f(j))^2E\max(n-\tau_{j-1},0)\\
&\geq \frac{\gamma^2}{n^2}\sum^{1+\lfloor \frac{n}{2a}\rfloor}_{j=p+1}(f(j))^2E\max(n-\tau_{j-1},0)\\
&\geq \frac{\gamma^2}{n^2}\sum^{1+\lfloor \frac{n}{2a}\rfloor}_{j=p+1}(1-\gamma)^{-j}\cdot \frac{n}{2}(1-\gamma)^{j-1}\\
&\geq \frac{\gamma^2}{2n(1-\gamma)}\left(\frac{n}{2a}-p\right).
\end{align*}
Hence, we have that
\begin{equation*}
\liminf_{n\to\infty}E\left[\left(\frac{G_n(f)}{n}-\frac{G_n(f^{(p)})}{n}\right)^2\right]\geq \frac{\gamma^2}{4a(1-\gamma)},
\end{equation*}
which contradicts with \eqref{eq: Gf-Gfp remark} for large enough $p$. Hence, $G_n(f)/n$ does not converge in $L^2$ as $n\to\infty$.
\end{proof}
\bibliographystyle{alpha}
\bibliography{group}

\end{document}